\newtheorem{lem}{Lemma}[section]
\newtheorem{thm}[lem]{Theorem}
\newtheorem{prop}[lem]{Proposition}
\theoremstyle{remark}
\theoremstyle{remark}
\DeclareMathOperator{\im}{im}
\DeclareMathOperator{\tr}{tr}
\newcommand{\Irr}{\mathrm{Irr}}
\newcommand{\GL}{\mathrm{GL}}
\newcommand{\reg}{\mathrm{reg}}
\newcommand{\mf}[1]{\mathfrak{#1}}
\newcommand{\ms}[1]{\mathscr{#1}}
\newcommand{\mc}[1]{\mathcal{#1}}
\newcommand{\mb}[1]{\mathbf{#1}}
\newcommand{\Leta}{{{}^{L}\eta}}
\newcommand{\LG}{{^{L}G}}
\newcommand{\ov}[1]{\overline{#1}}
\newcommand{\bb}[1]{\mathbb{#1}}
\newcommand{\Inn}{\mathrm{Inn}}
\newcommand{\ad}{\mathrm{ad}}
\newcommand{\Q}{\mathbb{Q}}
\newcommand{\C}{\bb{C}}
\newcommand{\inv}{\mathrm{inv}}
\newcommand{\Aut}{\mathrm{Aut}}
\newcommand{\iso}{\mathrm{iso}}
\newcommand{\der}{\mathrm{der}}
\newcommand{\Ad}{\mathrm{Ad}}
\newcommand{\LH}{{^{L}H}}
\newcommand{\SC}{\mathrm{sc}}
\newcommand{\el}{\mathrm{ell}}
\newcommand{\bas}{\mathrm{bas}}
\newcommand{\Avg}{\mathrm{Avg}}
\title{An Approach to the Characterization of the Local Langlands Correspondence}
\author{Alexander Bertoloni Meli \\ Alex Youcis}
\begin{document}
\maketitle

\section{Introduction}

The local Langlands conjecture for a reductive group $G$ over a $p$-adic field $F$ has held a central role in the study of number theory since its initial development by R. Langlands. While the precise formulation of these conjectures for the group $G=\GL_{n,F}$ is classical (e.g. see \cite{HarrisTaylor} and \cite{Henniart}), such statements for general $G$, and especially for $G$ which are not quasi-split, have only gradually been made precise over recent years (e.g. see \cite{KalethaLLNQSG} and the references therein). Such statements often list desiderata that the local Langlands conjecture for $G$ is expected to satisfy but generally make no claim that these properties uniquely characterize the correspondence.

In the case of $G=\GL_{n,F}$ such characterizations classically employ the theory of $L$-functions and $\epsilon$-factors. For other classical groups $G$, Arthur realized that one can often use the theory of standard and twisted endoscopy to relate the local Langlands conjecture for $G$ and the local Langlands conjecture for $\GL_{n,F}$, thus reducing the characterization problem for $G$ to the case of $\GL_{n,F}$. This was carried out for quasi-split symplectic and orthogonal groups in \cite{Arthurbook} and for quasi-split unitary groups in \cite{Mok}. The non quasi-split unitary case was tackled in \cite{KMSW}.

In \cite{ScholzeMasters}, Scholze gave an alternate characterization of the local Langlands conjecture for $\GL_{n,F}$. This characterization involves an explicit equation (called the \emph{Scholze--Shin equation(s)} in the article below) which relates the local Langlands conjecture to certain geometrically defined functions $f_{\tau,h}$ which are of geometric provenance. This characterization, unlike that appealing to the theory of $L$-functions and $\epsilon$-factors, has the property that it is amenable to study for a general group $G$. Namely, the functions $f_{\tau,h}$ as in \cite{ScholzeMasters} can be defined for a much wider class of groups than just $\GL_{n,F}$ (e.g. see \cite{ScholzeDeformation} and \cite{Youcis}) and thus one can ask whether analogues of Scholze's characterization of the local Langlands conjecture for $\GL_{n,F}$ exist for other groups $G$.

The goal of this article can then be stated as giving an affirmative answer to this question for supercuspidal $L$-parameters (conjecturally the class of parameters whose packets consist entirely of supercuspidal representations). We show that a conjectural Langlands correspondence satisfying a certain list of desiderata including the Scholze-Shin equations is uniquely characterized by these conditions. Our method, as currently stated, cannot hope to handle all groups $G$ but only groups satisfying a certain `niceness' condition. For example, both unitary and odd special orthogonal groups are `nice' whereas symplectic and even special orthogonal groups are not in general `nice'. 

\subsection*{Acknowledgements}

The authors are greatly indebted to Kaoru Hiraga for describing the argument for Proposition \ref{hiragalem}. This proposition allowed the authors to remove `atomic stability for singeltons' as an assumption in Theorem \ref{main}. 

The authors would like to kindly thank Tasho Kaletha, Sug Woo Shin, and Masao Oi for very helpful conversations concerning this paper. Finally, the authors would like to thank Mr.\@ and Mrs.\@ Bertoloni Meli for providing an excellent environment to write a paper replete with apples and rapini.

During the completion of this work, the first author was partially funded by NSF RTG grant 1646385. The second author was partially supported by the funding from the European Research Council (ERC)
under the European Union’s Horizon 2020 research and innovation program (grant agreement No. 802787).

\section{Notation}
Let $F$ be a $p$-adic local field. Fix an algebraic closure $\ov{F}$ and let $F^\mathrm{un}$ be the maximal unramified extension of $F$ in $\ov{F}$. Let $L$ be the completion of $F^\mathrm{un}$ and fix an algebraic closure $\ov{L}$.

Let $G$ be a connected reductive group over $F$. We denote by $G(F)^\reg$ the regular semisimple elements in $G(F)$ and by $G(F)^\el$ the subset of elliptic regular semisimple elements. We denote by $D$, or $D_G$, the discriminant map on $G(F)$. If $\gamma,\gamma'\in G(F)$ are stably conjugate we denote this by $\gamma\sim_\mathrm{st}\gamma'$.

Let $\widehat{G}$ be the connected Langlands dual group of $G$ and let $\LG$ be the Weil group version of the $L$-group of $G$ as defined in \cite[\S 1]{KottwitzCuspidal}. We denote the set of irreducible smooth representations of $G(F)$ by $\Irr(G(F))$ and by $\Irr^\SC(G(F))$ the subset of supercuspidal representations. For a finite group $C$ the notation $\Irr(C)$ means all irreducible $\C$-valued representations of $C$.

A \emph{supercuspidal Langlands parameter} is an $L$-parameter (see \cite[\S8.2]{BorelCorvallis}) $\psi: W_F \to \LG$ such that the image of $\psi$ is not contained in a proper Levi subgroup of $\LG$. We say that supercuspidal parameters $\psi$ and $\psi'$ are equivalent if they are conjugate in $\widehat{G}$ and denote this by $\psi \sim \psi'$. Let $C_{\psi}$ be the centralizer of $\psi(W_F)$ in $\widehat{G}$. Then by \cite[\S 10.3.1]{KottwitzCuspidal}, $\psi$ is supercuspidal if and only if the identity component $C^{\circ}_{\psi}$ of $C_{\psi}$ is contained in $Z(\widehat{G})^{\Gamma_F}$. We define the group $\ov{C_{\psi}} := C_{\psi}/ Z(\widehat{G})^{\Gamma_F}$ which is finite by our assumptions on $\psi$. For the sake of comparison, in \cite[Conj. F]{KalethaLLNQSG}, Kaletha defines $S^{\natural}_{\psi}:= C_{\psi} / (C_{\psi} \cap [\widehat{G}]_{\der})^{\circ}$. For $\psi$ a supercuspidal parameter, we have
\begin{equation}
S^{\natural}_{\psi} = C_{\psi}.  
\end{equation}
Indeed,
\begin{equation}
    (C_{\psi} \cap [\widehat{G}]_{\der})^{\circ}=(C^{\circ}_{\psi} \cap [\widehat{G}]_{\der})^{\circ} \subset (Z(\widehat{G})^{\Gamma_F} \cap [\widehat{G}]_{\der})^{\circ}=\{1\},
\end{equation}
from where the equality follows. 

Define $Z^1(W_F, G(\ov{L}))$ to be the set of continuous cocycles of $W_F$ valued in $G(\ov{L})$ and let $\mb{B}(G) := H^1(W_F, G(\ov{L}))$ be the corresponding cohomology group. Let $\kappa: \mb{B}(G) \to X^*(Z(\widehat{G})^{\Gamma_F})$ be the Kottwitz map as in \cite{Kotiso}.

An \emph{elliptic endoscopic datum} of  $G$ (cf. \cite[7.3-7.4]{KottwitzCuspidal}) is a triple $(H, s, \eta)$ of a quasisplit reductive group $H$, an element $s \in Z(\widehat{H})^{\Gamma_F}$, and a homomorphism $\eta : \widehat{H} \to \widehat{G}$. We require that $\eta$ gives an isomorphism
\begin{equation}
    \eta: \widehat{H} \to Z_{\widehat{G}}(\eta(s))^{\circ},
\end{equation}
that the $\widehat{G}$-conjugacy class of $\eta$ is stable under the action of $\Gamma_F$, and that $(Z(\widehat{H})^{\Gamma_F})^{\circ} \subset Z(\widehat{G})$.

An \emph{extended elliptic endoscopic datum} of $G$ is a triple $(H,s, \Leta)$ such that $\Leta: \LH \to \LG$ and $(H, s, \Leta|_{\widehat{H}})$ gives an elliptic endoscopic datum of $G$.

An \emph{extended elliptic hyperendoscopic datum} is a sequence of tuples of data $(H_1, s_1, \Leta_1),\ldots,(H_k, s_k, \Leta_k)$ such that $(H_1, s_1, \Leta_1)$ is an extended elliptic endoscopic datum of $G$, and for $i>1$, the tuple $(H_i, s_i, \Leta_i)$ is an extended elliptic endoscopic datum of $H_{i-1}$. An \emph{elliptic hyperendoscopic group} of $G$ is a quasisplit connected reductive group $H_k$ appearing in an extended elliptic hyperendoscopic datum for $G$ as above.

\section{Statement of main result}

Throughout the rest of the paper we assume that our groups $G$ satisfy the following assumption:
\begin{itemize}[align=parleft,labelsep=.5cm]
\item[\textbf{(Ext)}] For each elliptic hyperendoscopic group $H$ of $G$ and each elliptic endoscopic datum  $(H',s,\eta')$ of $H$, one can extend $(H', s, \eta')$ to an extended elliptic endoscopic datum $(H', s, \Leta')$ such that $\Leta': \LH' \to \LH$.
\end{itemize}
For a discussion on the severity of this assumption see \S\ref{extcondsec}.

We now state the main result. Let us fix $G^\ast$ to be a quasi-split reductive group over $F$. We define a \emph{supercuspidal local Langlands correspondence} for a group $G^\ast$ to be an assignment 
\begin{equation}
    \Pi_H:\left\{\begin{matrix}\text{Equivalence classes of}\\ \text{Supercuspidal }L\text{-parameters} \\ \text{for } H\end{matrix}\right\} \to \left\{\begin{matrix}\text{Subsets of}\\ \Irr^\SC(H(F))\end{matrix}\right\},
\end{equation}
for every elliptic hyperendoscopic group $H$ of $G^\ast$ satisfying the following properties:
\begin{itemize}[align=parleft,labelsep=.5cm]
    \item[\textbf{(Dis)}] If $\Pi_H(\psi) \cap \Pi_H(\psi') \neq \emptyset$ then $\psi \sim \psi'$.
    \item[\textbf{(Bij)}] For each Whittaker datum $\mf{w}_H$ of $H$, a bijection
    \begin{equation}
        \iota_{\mf{w}_H}:\Pi_H(\psi)\to \Irr(\ov{C_\psi}).
    \end{equation}
    This bijection $\iota_{\mf{w}_H}$ gives rise to a pairing
    \begin{equation}
        \langle -,-\rangle_{\mf{w}_H}:\Pi_H(\psi)\times \ov{C_\psi}\to \C,
    \end{equation}
    defined as follows:
    \begin{equation}
        \langle \pi,s\rangle_{\mf{w}_H}:=\tr(s \mid \iota_{\mf{w}_H}(\pi)).
    \end{equation}
    \item[\textbf{(St)}] For all supercuspidal $L$-parameters $\psi$ of $H$, the distribution
    \begin{equation}
        S\Theta_\psi:=\sum_{\pi\in\Pi_H(\psi)}\langle \pi,1\rangle\Theta_\pi,
    \end{equation}
    is stable and does not depend on the choice of $\mf{w}_H$.
    \item[\textbf{(ECI)}] For all extended elliptic endoscopic data $(H',s, \Leta)$ for $H$ and all $h\in \mathscr{H}(H(F))$, suppose $\psi^H$ is a supercuspidal $L$-parameter of $H$ that factors through $\Leta$ by some parameter $\psi^{H'}$. Then such a $\psi^{H'}$ must be supercuspidal and we assume it satisfies the  \emph{endoscopic character identity}:
    \begin{equation}
        S\Theta_{\psi^{H'}}(h^{H'})=\Theta^{s}_{\psi^{H}}(h),
    \end{equation}
    where we define $h^{H'}$ to be a transfer of $h$ to $H'$ (e.g. see \cite[\S1.3]{KalethaLLNQSG}) and we define
    \begin{equation}
        \Theta^{s}_{\psi^{H}}:=\sum_{\pi\in\Pi_H(\psi^{H})}\langle \pi,s\rangle\Theta_{\pi}.
    \end{equation}
    the $s$-twisted character of $\psi^H$.
\end{itemize}
Suppose now that $z_\iso\in Z^1(W_F,G(\ov{L}))$ projecting to an element of $\mb{B}(G)_{\bas}$. Let $G$ be the inner form of $G^\ast$ corresponding to the projection of $z_\iso$ to $Z^1(W_F,\Aut(G)(\ov{F}))$.  We then define a \emph{supercuspidal local Langlands correspondence} for the \emph{extended pure inner twist} $(G,z_\iso)$ (cf. \cite[\S2.5]{KalethaLLNQSG}) to be a supercuspidal local Langlands correspondence for $G^\ast$ as well as a correspondence
\begin{equation}
    \Pi_{(G,z_\iso)}:\left\{\begin{matrix}\text{Supercuspidal }L\text{-parameters}\\ \text{for }G\end{matrix}\right\}\to \left\{\begin{matrix}\text{Subsets of}\\ \Irr^\SC(G(F))\end{matrix}\right\},
\end{equation}
satisfying
\begin{itemize}[align=parleft,labelsep=.5cm]
\item[\textbf{(Bij')}] For each Whittaker datum $\mf{w}_G$ of $G$, a bijection
\begin{equation}
    \iota_{\mf{w}_G}:\Pi_G(\psi)\to \Irr(C_{\psi},\chi_{z_\iso}),
\end{equation}
where $\Irr(C_{\psi}, \chi_{z_{\iso}})$ denotes the set of equivalence classes of irreducible algebraic representations of $C_{\psi}$ with central character on $Z(\widehat{G})^{\Gamma_F}$ equal to $\chi_{z_{\iso}} :=\kappa(\ov{z_{\iso}})$. This gives rise to a pairing
\begin{equation}
    \langle -,-\rangle_{\mf{w}_G}:C_\psi\times\Irr(C_\psi,\chi_{z_\iso})\to\C,
\end{equation}
defined as
\begin{equation}
    \langle \pi,s\rangle_{\mf{w}_G}:=\tr(s \mid \iota_{\mf{w}_G}(\pi)).
\end{equation}
\item[\textbf{(ECI')}] For all supercuspidal parameters $\psi$ of $G$ and all extended elliptic endoscopic data $(H,s,\Leta)$ of $G$ such that $\psi$ factors as $\psi= \Leta \circ \psi^H$, there is an equality 
\begin{equation}
    \Theta^s_{\psi^H}(h^H)=S\Theta_\psi(h),
\end{equation}
where $h\in \ms{H}(G(F))$ and $S\Theta_{\psi}$ is independent of choice of Whittaker datum in \textbf{(Bij')}.
\end{itemize}

For a supercuspidal local Langlands correspondence $\Pi$ for $(G,z_\iso)$ we say that a subset of $\Irr(H(F))$ of the form $\Pi_\psi(H)$ is a \emph{supercuspidal $L$-packet} for $\Pi_H$. We furthermore say that an element $\pi$ of $\Irr(H(F))$ is \emph{$\Pi_H$-accessible} if $\pi$ is in a supercuspidal $L$-packet for $\Pi_H$.

A priori, the above axioms \textbf{(Dis)}, \textbf{(Bij)}-\textbf{(Bij')},\textbf{(St)}, and \textbf{(ECI)}-\textbf{(ECI')} are not enough to uniquely specify a supercuspidal local Langlands correspondence $\Pi$ for $G^*$ even under the specification of the set of $\Pi$-accessible representations. The goal of our main theorem is to explain a sufficient extra condition which does uniquely specify a supercuspidal local Langlands correspondence.

In the statement of this condition we need to assume an extra property of $G$. Namely, we say that $G^*$ is \emph{good} if for every elliptic hyperendoscopic group $H$ of $G^\ast$ we have:
\begin{itemize}[align=parleft,labelsep=.5cm]
\item[\textbf{(Mu)}] There exists a set $S^H$ of dominant cocharacters of $H_{\ov{F}}$ with the following propery. Let $\psi^H_1, \psi^H_2$ be any pair of supercuspidal parameters of $H$ such that for all dominant cocharacters $\mu\in S^H$, we  have an equivalence $r_{-\mu} \circ \psi^{H}_1 \sim r_{- \mu} \circ \psi^H_2$. Then $\psi^H_1 \sim \psi^H_2$.
\end{itemize}
Here $r_{-\mu}$ is the representation of $\LH$ as defined in \cite[(2.1.1)]{KottwitzTOO}.We say that $G$ is \emph{good} if $G^\ast$ is. We call a set $S^H$ as in assumption \textbf{(Mu)} \emph{sufficient}. See \S\ref{goodsec} for a discussion of the severity of this assumption.

To this end, let us define a \emph{Scholze--Shin datum} $\{f_{\tau,h}^{\mu}\}$ for $G$ to consist of the following data for each elliptic hyperendoscopic group $H$ of $G$:
\begin{itemize}
    \item A compact open subgroup $K^H \subset H(F)$,
    \item A sufficient set $S^H$ of dominant cocharacters of $H_{\ov{F}}$,
    \item For each $\mu \in S^H$ of with reflex field $E_{\mu}$, each $\tau \in W_{E_{\mu}}$, and each $h \in \ms{H}(K^H)$, a function $f^{\mu}_{\tau,h} \in \ms{H}(H(F))$.
\end{itemize}

Let us say that a supercuspidal local Langlands correspondence for $G$ satisfies the \emph{Scholze--Shin equations} relative to the Scholze--Shin datum $\{f_{\tau,h}^{\mu}\}$ if the following holds:
\begin{itemize}[align=parleft,labelsep=.5cm]
    \item[\textbf{(SS)\text{ }}] For all elliptic hyperendoscopic groups $H$, all $h\in\ms{H}(K^H)$, all ${\mu}\in S^H$, and all parameters $\psi^H$ of $H$ one has that
    \begin{equation}
        S\Theta_{\psi^H}(f_{\tau,h}^{\mu})=\tr(\tau\mid (r_{-{\mu}}\circ \psi^H)(\chi_\mu))S\Theta_{\psi^H}(h),
    \end{equation}
    where $\chi_\mu:=|\cdot|^{-\langle \rho,\mu\rangle}$ and $\rho$ is the half-sum of the positive roots of $H$ (for a representation $V$ and character $\chi$ we denote by $V(\chi)$ the character twist of $V$ by $\chi$).
\end{itemize}

We then have the following result:

\begin{thm}\label{main} Let $G$ be a good group and suppose $\Pi^i$ for $i=1,2$ are supercuspidal local Langlands correspondences for $(G,z_\iso)$ such that
\begin{enumerate}
    \item For every elliptic hyperendoscopic group $H$ of $G$ the set of $\Pi^1_H$-accessible representations is contained in the set of $\Pi^2_H$-accessible representations. 
    \item There exists a Scholze--Shin datum $\{f^{\mu}_{\tau,h}\}$ such that $\Pi^i$ satisfies \textbf{\emph{(SS)}} relative to $\{f^{\mu}_{\tau,h}\}$ for $i=1,2$.
\end{enumerate}
Then $\Pi^1=\Pi^2$ and for every $(H, z)$, either equal to $(H,1)$ where $H$ is an elliptic hyperendoscopic group of $G$ or equal to $(G,z_\iso)$, and choice of Whittaker datum $\mf{w}_H$,  the bijections $\iota_{\mf{w}_H}^i$ for $i=1,2$ agree.
\end{thm}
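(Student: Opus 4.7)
The plan is to establish the theorem by induction on the depth of the elliptic hyperendoscopic chain terminating at each quasi-split hyperendoscopic group $H$ of $G^*$, showing at each stage that $\Pi^1_H(\psi)=\Pi^2_H(\psi)$ together with the equality of Whittaker-normalized bijections $\iota^1_{\mf{w}_H}=\iota^2_{\mf{w}_H}$. By Fourier inversion on the finite group $\ov{C_\psi}$ combined with linear independence of supercuspidal characters, this conclusion is equivalent to the pointwise equality $\Theta^{1,s}_\psi=\Theta^{2,s}_\psi$ for every $s\in\ov{C_\psi}$. Once this is established for all quasi-split hyperendoscopic groups of $G^*$, the inner-form statement for $(G,z_\iso)$ will follow from \textbf{(ECI')} by an analogous argument.

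For the base case, $H$ admits no nontrivial elliptic endoscopic data. Any nontrivial $s\in\ov{C_\psi}$ would produce a nontrivial elliptic endoscopic datum of $H$ via the centralizer construction (using supercuspidality of $\psi$ to verify ellipticity), so $\ov{C_\psi}$ must be trivial and both packets are singletons. Writing $\Pi^1_H(\psi)=\{\pi\}$, the accessibility hypothesis produces $\psi'$ with $\Pi^2_H(\psi')=\{\pi\}$; then $\Theta_\pi=S\Theta^1_\psi=S\Theta^2_{\psi'}$, and applying the Scholze--Shin equation to both stable characters yields $\tr(\tau\mid r_{-\mu}\circ\psi)=\tr(\tau\mid r_{-\mu}\circ\psi')$ for every $\tau\in W_{E_\mu}$ and $\mu\in S^H$; condition \textbf{(Mu)} then forces $\psi\sim\psi'$.

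For the inductive step, fix $H$ admitting proper endoscopy and assume the claim for every proper hyperendoscopic group of $H$. For each nontrivial $s\in\ov{C_\psi}$, condition \textbf{(Ext)} provides an extended elliptic endoscopic datum $(H',s,\Leta)$ of $H$ with $H'$ a proper hyperendoscopic group through which $\psi$ factors as a supercuspidal parameter $\psi^{H'}$. Then \textbf{(ECI)} gives $\Theta^{i,s}_\psi(h)=S\Theta^i_{\psi^{H'}}(h^{H'})$, and the induction hypothesis applied to $H'$ yields $S\Theta^1_{\psi^{H'}}=S\Theta^2_{\psi^{H'}}$, whence $\Theta^{1,s}_\psi=\Theta^{2,s}_\psi$ for every $s\neq 1$. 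To upgrade this to $s=1$, I apply Fourier inversion over $\Irr(\ov{C_\psi})$: letting $\pi^i_\rho\in\Pi^i_H(\psi)$ denote the preimage of $\rho$ under $\iota^i_{\mf{w}_H}$, the equality of $s\neq 1$ twisted characters yields
\[
  \Theta_{\pi^1_\rho}-\Theta_{\pi^2_\rho}=\frac{\dim\rho}{|\ov{C_\psi}|}\bigl(S\Theta^1_\psi-S\Theta^2_\psi\bigr).
\]
Since the right-hand side is independent of $\rho$ while the left-hand side has support and coefficients dictated by linear independence of the $\Theta_\pi$ and the injectivity of $\iota^i_{\mf{w}_H}$, running through the two possible cases -- $\Irr(\ov{C_\psi})$ containing representations of distinct dimensions, and all representations of $\ov{C_\psi}$ having a common dimension -- one forces every difference $\Theta_{\pi^1_\rho}-\Theta_{\pi^2_\rho}$ to vanish, hence $S\Theta^1_\psi=S\Theta^2_\psi$ and the inductive step follows.

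Finally, for the inner form $(G,z_\iso)$, apply \textbf{(ECI')} to each extended elliptic endoscopic datum $(H,s,\Leta)$ of $G$: the identity $S\Theta^i_\psi(h)=\Theta^{i,s}_{\psi^H}(h^H)$ together with the quasi-split conclusion for $H$ yields $S\Theta^1_\psi=S\Theta^2_\psi$ on $G$, and an analogous Fourier inversion over $\Irr(C_\psi,\chi_{z_\iso})$, using the central-character constraint and the Whittaker normalization in \textbf{(Bij')}, then gives the matching of packets and bijections in the inner-form case. I expect the main obstacle to be the Fourier-inversion step at $s=1$ in the inductive argument: the case analysis must carefully rule out potential ``bijection-swap'' configurations that are a priori compatible with the vanishing of differences at every $s\neq 1$, particularly in low-order $\ov{C_\psi}$, and making this rigidity precise is where the bulk of the technical work will lie.
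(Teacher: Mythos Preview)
Your overall architecture is close to the paper's, but there is one genuine gap and one avoidable complication.

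\textbf{The gap: singleton packets at the inductive step.} Your induction handles the case $\ov{C_\psi}\neq\{1\}$ via endoscopy, and your base case handles groups $H$ with no nontrivial elliptic endoscopy by arguing that then every $\ov{C_\psi}$ is trivial. But in the inductive step it can easily happen that $H$ admits plenty of proper elliptic endoscopy while a particular supercuspidal parameter $\psi$ still has $\ov{C_\psi}=\{1\}$ (e.g.\ any $\psi$ for $\GL_n$). In that situation your Fourier identity becomes the tautology $\Theta_{\pi^1}-\Theta_{\pi^2}=S\Theta^1_\psi-S\Theta^2_\psi$, and there is no nontrivial $s$ to feed into \textbf{(ECI)}. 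Your base-case trick does not rescue this: accessibility only tells you $\pi\in\Pi^2_H(\psi')$ for some $\psi'$, and without knowing $\ov{C_{\psi'}}=\{1\}$ you cannot conclude $\Pi^2_H(\psi')=\{\pi\}$ and hence cannot run the \textbf{(SS)}/\textbf{(Mu)} comparison. The paper closes this gap with a separate ingredient, \emph{atomic stability of $L$-packets} (their Proposition~\ref{stabprop}): any stable combination of accessible supercuspidal characters is a combination of stable packet characters. Since $\Theta_\pi=S\Theta^1_\psi$ is stable, atomic stability for $\Pi^2$ forces $\Pi^2_H(\psi')=\{\pi\}$, after which the \textbf{(SS)}/\textbf{(Mu)} argument goes through exactly as you wrote it. You should invoke (and prove, or cite) this atomic stability statement; it is not a consequence of the axioms you have used so far, and its proof is the most delicate part of the paper.

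\textbf{The avoidable complication: Fourier inversion at $s=1$.} Your case analysis on $\Irr(\ov{C_\psi})$ can in fact be made to work (compare any $\rho$ against the trivial representation and use linear independence of the at most four characters involved), so this is not a gap. But the paper sidesteps it entirely with an elementary observation (their Lemma~\ref{nontrivslem}): if $\ov{C_\psi}\neq\{1\}$, every irreducible $\rho$ has $\chi_\rho(s)\neq 0$ for some nontrivial $s$. Hence for each $\pi\in\Pi^1_H(\psi)$ one finds a nontrivial $s$ with $\langle\pi,s\rangle\neq 0$; the already-established equality $\Theta^{1,s}_\psi=\Theta^{2,s}_\psi$ then exhibits $\pi$ with nonzero coefficient on the $\Pi^2$ side, giving $\pi\in\Pi^2_H(\psi)$. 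This yields $\Pi^1_H(\psi)\subset\Pi^2_H(\psi)$, and \textbf{(Bij)} (equal cardinality) gives equality, after which the bijection statement follows from $\Theta^{1,s}_\psi=\Theta^{2,s}_\psi$ for all $s$ using that the packets now coincide. This is cleaner than your Fourier route and requires no case analysis.

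Two minor points: your induction variable ``depth of the hyperendoscopic chain'' increases rather than decreases when passing to endoscopic groups; the paper uses the number of roots, which is what you want. And in the inner-form step you do not need a second Fourier inversion: once the quasi-split case is done, \textbf{(ECI')} applied to every $s\in C_\psi$ (including $s$ central, via the endoscopic datum $(G^\ast,1,\id)$) already gives $\Theta^{1,s}_\psi=\Theta^{2,s}_\psi$ for all $s$, from which both packet and bijection equality follow directly by linear independence of characters.
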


\section{Atomic stability of $L$-packets}

Before we begin the proof of Theorem \ref{main} in earnest, we first discuss the following extra assumption one might make on a supercuspidal local Langlands correspondence $\Pi$ for the group $G$ which, for this section, we assume is quasi-split. Namely, let us say that $\Pi$ possesses \emph{atomic stability} if the following condition holds:
    \begin{itemize}[align=parleft,labelsep=.5cm]
    \item[\textbf{(AS)}] If $S=\{\pi_1, ...,\pi_k\}$ is a finite subset of $\Pi$-accessible elements of $\Irr^\SC(G(F))$ and $\{a_1, ..., a_k\}$ is a set of complex numbers such that $\displaystyle \Theta:=\sum\limits^k_{i=1} a_i \pi_i$ is a stable distribution, then there is a partition
    \begin{equation}
        S=\Pi_{\psi_1}(G)\sqcup\cdots\sqcup \Pi_{\psi_n}(G)
    \end{equation}
    such that
    \begin{equation}
        \Theta=\sum_{j=1}^n b_j S\Theta_{\psi_j}
    \end{equation}
    (i.e. that $a_i$ is constant on $\Pi_{\psi_i}(G)$).
    \end{itemize}
We then have the following result:

\begin{prop}\label{stabprop} Let $\Pi$ be supercuspidal local Langlands correspondence for a group $G$. Then, $\Pi$ automatically possesses atomic stability. 
\end{prop}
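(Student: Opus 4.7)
The plan is to reduce atomic stability to the case of a single packet by a packet-level decomposition of $\Theta$, and then handle the single-packet case via Fourier inversion on the component group together with the endoscopic character identity. By \textbf{(Dis)}, each $\pi_i \in S$ lies in a unique supercuspidal $L$-packet, so we may partition $S = S_1 \sqcup \cdots \sqcup S_m$ with $S_j \subseteq \Pi_{\psi_j}$ for pairwise inequivalent supercuspidal parameters $\psi_j$. Extending each $a_\pi$ to $\Pi_{\psi_j}$ by zero and setting $\widetilde\Theta_j := \sum_{\pi \in \Pi_{\psi_j}} a_\pi\Theta_\pi$, we have $\Theta = \sum_j \widetilde\Theta_j$, and it suffices to show each $\widetilde\Theta_j$ is a scalar multiple of $S\Theta_{\psi_j}$.

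Fix a Whittaker datum $\mathfrak{w}$. By \textbf{(Bij)} the packet $\Pi_{\psi_j}$ is in bijection with $\Irr(\ov{C_{\psi_j}})$, and the twisted characters $\Theta^s_{\psi_j}$ indexed by conjugacy classes $[s] \in \ov{C_{\psi_j}}$ form an alternative basis of the $\C$-span of $\{\Theta_\pi : \pi \in \Pi_{\psi_j}\}$, related to $\{\Theta_\pi\}$ by the character table of $\ov{C_{\psi_j}}$. Expanding $\widetilde\Theta_j = \sum_s d_{j,s}\,\Theta^s_{\psi_j}$, the combination $\sum_j d_{j,1}\,S\Theta_{\psi_j}$ is stable by \textbf{(St)}, so the residual sum $\Xi := \sum_j\sum_{s \neq 1} d_{j,s}\,\Theta^s_{\psi_j}$ is likewise stable. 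Granting $\Xi = 0$, one obtains $a_\pi = d_{j,1}\langle \pi, 1\rangle$ for every $\pi \in \Pi_{\psi_j}$; since $\langle \pi, 1\rangle = \dim \iota_\mathfrak{w}(\pi) \geq 1$, either $d_{j,1} = 0$ (contributing nothing to $\Theta$) or $\Pi_{\psi_j} \subseteq S$, and (after discarding zero-contribution indices) one recovers the required complete-packet partition $S = \bigsqcup_j \Pi_{\psi_j}$ together with the decomposition $\Theta = \sum_j d_{j,1}\,S\Theta_{\psi_j}$.

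The main obstacle is proving $\Xi = 0$. By \textbf{(ECI)}, each $\Theta^s_{\psi_j}$ with $s \neq 1$ equals the transfer $f \mapsto S\Theta_{\psi_j^{H}}(f^{H})$ from a proper elliptic endoscopic group $H = H_{j,s}$ of $G$, so stability of $\Xi$ amounts to saying that a certain $\C$-linear combination of pullbacks of stable supercuspidal characters from proper endoscopic groups vanishes on all test functions with zero stable orbital integrals on $G$. My approach would be to exploit the freedom in choosing Hecke functions $f$ with vanishing stable orbital integrals on $G$ but prescribed $\kappa$-orbital integrals, pair against $\Xi$, and use linear independence of distinct stable supercuspidal characters on each endoscopic group $H_{j,s}$, together with the inequivalence of the pairs $(H_{j,s}, \psi_j^{H})$ for distinct $(j,s)$ with $s \neq 1$, to successively isolate and annihilate each coefficient $d_{j,s}$. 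This endoscopic-rigidity step is the technical heart of the argument; it is presumably where the proof invokes the lemma attributed to Hiraga in the acknowledgements (Proposition~\ref{hiragalem}), which provides exactly the necessary refinement over the direct consequences of the axioms.
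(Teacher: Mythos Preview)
Your reduction is correct and matches the paper's: partition $S$ into packets, change basis to the twisted characters $\Theta^s_{\psi_j}$, subtract the stable piece $\sum_j d_{j,1}\,S\Theta_{\psi_j}$, and reduce to showing the residual $\Xi=\sum_j\sum_{s\neq 1} d_{j,s}\,\Theta^s_{\psi_j}$ vanishes. You are also right that this vanishing is exactly the content of Proposition~\ref{hiragalem}, and that once it is in hand the complete-packet partition and the decomposition $\Theta=\sum_j d_{j,1}\,S\Theta_{\psi_j}$ follow as you describe.

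Where your sketch diverges from the paper is the mechanism for $\Xi=0$. You propose an endoscopic-rigidity argument: realize each $\Theta^s_{\psi_j}$ with $s\ne 1$ as the pullback of a stable character from a proper endoscopic group via \textbf{(ECI)}, then isolate terms using test functions with vanishing stable orbital integrals on $G$ but controlled $\kappa$-orbital integrals. This is plausible but unfinished, and would require care: one needs surjectivity-type statements for transfer, and distinct pairs $(j,s)$ can land on the same endoscopic group $H$, so one must also verify that the resulting parameters $\psi_j^{H}$ are pairwise inequivalent there. The paper takes a different and more direct route. It works on the elliptic regular set, using the restriction map $R:D(\Irr^\SC(G(F)))\to C^\infty(G(F)^\el,\C)$ (injective by Kazhdan) and the averaging map $\Avg$ over stable classes. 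Stability forces $\Avg(R(\Xi))=R(\Xi)$; on the other hand an explicit transfer-factor computation shows $\Avg(R(\Theta^s_\psi))=0$ for every nontrivial $s$, the key point being
\[
\sum_{\gamma'\sim_{\mathrm{st}}\gamma}\Delta(\gamma_H,\gamma')
=\Delta(\gamma_H,\gamma)\sum_{\chi\in \mf{K}(I_\gamma/F)^D}\chi(s)=0,
\]
using the cocycle relation $\Delta(\gamma_H,\gamma')=\langle\inv(\gamma,\gamma'),s\rangle\,\Delta(\gamma_H,\gamma)$ and, for elliptic $\gamma$, the bijection $\gamma'\mapsto\inv(\gamma,\gamma')$ onto $\mf{K}(I_\gamma/F)^D$ with $s$ nontrivial in $\mf{K}(I_\gamma/F)$. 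Hence $R(\Xi)=0$ and so $\Xi=0$. So Hiraga's input (Proposition~\ref{hiragalem}) is not an endoscopic-separation lemma of the kind you envisage but rather this averaging argument on the elliptic locus.
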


Proposition \ref{stabprop} will follow from the following a priori weaker proposition. To state it we make the following definitions. For supercuspidal $L$-parameters $\psi_1, \ldots, \psi_n$ we denote by $D(\psi_1, \ldots, \psi_n)$ the $\C$-span of the distributions $\Theta_\pi$ for $\pi\in \Pi_G(\psi_1)\cup\cdots\cup\Pi_G(\psi_n)$ and let $S(\psi_1, \ldots, \psi_n)$ be the subspace of stable distributions in $D(\psi_1, \ldots , \psi_n)$. 

\begin{prop}\label{hiragalem} For any finite set of supercuspidal $L$-parameters $\{\psi_1, \ldots, \psi_n\}$ one has that $\{S\Theta_{\psi_1}, \ldots, S\Theta_{\psi_n}\}$ is a basis for $S(\psi_1, \ldots,\psi_n)$.
\end{prop}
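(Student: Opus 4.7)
My plan is to establish both containments between $\mathrm{span}_{\C}\{S\Theta_{\psi_1}, \ldots, S\Theta_{\psi_n}\}$ and $S(\psi_1, \ldots, \psi_n)$. By \textbf{(Dis)} I may assume the $\psi_i$ are pairwise inequivalent, so the packets $\Pi_G(\psi_i)$ are disjoint. Linear independence of $\{S\Theta_{\psi_i}\}$ then follows from the standard linear independence of the characters $\Theta_\pi$ of distinct irreducible smooth representations of $G(F)$, since each $S\Theta_{\psi_i}$ is a $\C$-linear combination of $\Theta_\pi$ for $\pi$ in a packet disjoint from the others. By \textbf{(St)}, each $S\Theta_{\psi_i}$ is stable, giving the easy inclusion $\mathrm{span}_{\C}\{S\Theta_{\psi_i}\} \subseteq S(\psi_1, \ldots, \psi_n)$.

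For the reverse inclusion, let $\Theta \in S(\psi_1, \ldots, \psi_n)$. By disjointness of packets, $\Theta$ decomposes uniquely as $\Theta = \sum_i \Theta_i$ with $\Theta_i \in D(\psi_i)$, and using \textbf{(Bij)} together with character inversion on the finite group $\ov{C_{\psi_i}}$, I would rewrite each $\Theta_i$ as a linear combination of the $s$-twisted characters
\begin{equation}
\Theta_i = \sum_{[s]} a_{s,i}\,\Theta^s_{\psi_i},
\end{equation}
where $[s]$ ranges over conjugacy classes of $\ov{C_{\psi_i}}$. Since $\Theta^s_{\psi_i} = S\Theta_{\psi_i}$ whenever $s$ has trivial image in $\ov{C_{\psi_i}}$, the proposition reduces to showing $a_{s,i} = 0$ for every non-trivial $[s]$.

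To achieve this I would invoke \textbf{(ECI)}: for each non-trivial $s \in \ov{C_{\psi_i}}$, assumption \textbf{(Ext)} supplies an extended elliptic endoscopic datum $(H_s, s, \Leta_s)$ through which $\psi_i$ factors as $\psi_i = \Leta_s \circ \psi_i^{H_s}$ with $\psi_i^{H_s}$ automatically supercuspidal, and one has
\begin{equation}
\Theta^s_{\psi_i}(h) = S\Theta_{\psi_i^{H_s}}(h^{H_s})
\end{equation}
for every $h \in \ms{H}(G(F))$. Because $\Theta$ is stable, $\Theta(h) = 0$ whenever the stable orbital integrals of $h$ on $G$ vanish; applied to such $h$, this yields linear relations among the $a_{s,i}$ with coefficients $S\Theta_{\psi_i^{H_s}}(h^{H_s})$. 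Surjectivity of Langlands--Shelstad transfer should then allow me to choose $h$ with vanishing stable orbital integrals on $G$ but with prescribed transfers $h^{H_s}$ to each relevant $H_s$, and linear independence of the distinct stable characters appearing on the endoscopic groups will force $a_{s,i} = 0$ for all $s \neq 1$.

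The main obstacle is this last step: producing test functions that jointly have zero stable orbital integrals on $G$ while having independently prescribed transfers $h^{H_s}$, together with the need to know that the $S\Theta_{\psi_i^{H_s}}$ across various $(i, s)$ are linearly independent as functionals on the test-function space. The cleanest implementation is likely an induction on the depth of hyperendoscopic descent, applying Proposition \ref{hiragalem} itself to the proper elliptic endoscopic groups of $G$ in order to supply the required linear independence of their stable characters; the base case would concern groups without non-trivial elliptic endoscopy, where the spanning claim is vacuous. An alternative route through a direct orthogonality argument on elliptic regular elements may also be feasible but seems less natural given the axiomatic setup.
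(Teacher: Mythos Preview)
Your setup through the expansion $\Theta=\sum_{i,[s]}a_{s,i}\Theta^s_{\psi_i}$ via \textbf{(Bij)} matches the paper. The divergence is in how the coefficients $a_{s,i}$ for nontrivial $s$ are killed, and here the paper takes precisely the route you set aside as ``less natural'': a direct orthogonality argument on elliptic regular elements, with no use of \textbf{(ECI)} and no induction. One restricts to $G(F)^\el$ via the map $R:\Theta_\pi\mapsto f_\pi|_{G(F)^\el}$, which is injective on linear combinations of supercuspidal characters by a theorem of Kazhdan. Since a stable $\Theta$ satisfies $\Avg(R(\Theta))=R(\Theta)$, it suffices to show $\Avg(R(\Theta^s_{\psi}))=0$ for each nontrivial $s$. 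This is a transfer-factor computation: $\Avg(R(\Theta^s_\psi))(\gamma)$ is rewritten as a sum over stable classes $\gamma_H$ on the endoscopic side with inner factor $\sum_{\gamma'\simst\gamma}\Delta(\gamma_H,\gamma')$, and the cocycle relation $\Delta(\gamma_H,\gamma')=\langle\inv(\gamma,\gamma'),s\rangle\,\Delta(\gamma_H,\gamma)$ turns this into $\Delta(\gamma_H,\gamma)\sum_{\chi\in\mf{K}(I_\gamma/F)^D}\chi(s)=0$, because $s$ is nontrivial in $\mf{K}(I_\gamma/F)$ for elliptic $\gamma$. Injectivity of $R$ then finishes.

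Your \textbf{(ECI)}-plus-transfer route carries the gap you yourself flag, and it is a real one: there is no ready-made statement producing a single $h$ with vanishing stable orbital integrals on $G$ and \emph{independently} prescribed transfers to the various $H_s$, and distinct pairs $(i,s)$ may land in the same endoscopic datum, so separating them on the endoscopic side requires more than linear independence of stable characters on each $H_s$ individually. The inductive scheme also sits awkwardly with the proposition as stated (for a single $G$, not simultaneously for all hyperendoscopic groups). The elliptic-averaging argument sidesteps all of this and is self-contained; the ``alternative'' you discarded is in fact the cleaner path here.
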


Let us note that this proposition actually implies Proposition \ref{stabprop}. Indeed, since each $\pi_i\in S$ is accessible we know that we can enlarge $S$ to be a union $\Pi_{\psi_1}(G)\sqcup\cdots\sqcup \Pi_{\psi_n}(G)$ of $L$-packets. Proposition \ref{stabprop} is then clear since every stable distribution in the span of $S$ is contained in $S(\psi_1,\ldots,\psi_n)$. 

Before we proceed with the proof of Proposition \ref{hiragalem} we establish some further notation and basic observations. For an $\pi$ element of $\Irr^\SC(G(F))$ we denote by $f_\pi$ the locally constant $\C$-valued function on $G(F)^\reg$ given by the Harish-Chandra regularity theorem. We then obtain a linear map
\begin{equation}
    R:D(\Irr^\SC(G(F)))\to C^\infty(G(F)^\el,\C)
\end{equation}
given by linearly extending the association $\Theta_\pi\mapsto f_\pi\mid_{G(F)^\el}$. Here $D(\Irr^\SC(G(F)))$ is the $\C$-span of the distributions on $\mc{H}(G(F))$ of the form $\Theta_\pi$ for $\pi\in\Irr^\SC(G(F))$. We also have averaging maps
\begin{equation}
    \Avg:C^\infty(G(F)^\el,\C)\to C^\infty(G(F)^\el,\C)
\end{equation}
given by 
\begin{equation}
    \Avg(f)(\gamma):=\frac{1}{n_\gamma}\sum_{\gamma'}f(\gamma')
\end{equation}
where $\gamma'$ runs over representatives of the conjugacy classes of $G(F)$ stably equal to the conjugacy class of $\gamma$ and $n_\gamma$ is the number of such classes (which is finite since $F$ is a $p$-adic field).

We then have the following well-known lemma concerning $R$:
\begin{lem}[{\cite[Theorem C]{Kazhdanpaper}}]{\label{Rinj}}
The linear map $R$ is injective.
\end{lem}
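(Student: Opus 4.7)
The plan is to derive the injectivity of $R$ from Kazhdan's orthogonality relations for supercuspidal characters on the elliptic regular set. Distinct irreducible characters $\Theta_\pi$ of $G(F)$ are linearly independent as distributions, so injectivity of $R$ on $D(\Irr^{\SC}(G(F)))$ is equivalent to the assertion that the family of restrictions $\{f_\pi|_{G(F)^\el} : \pi\in\Irr^{\SC}(G(F))\}$ is linearly independent as $\C$-valued functions on $G(F)^\el$.

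To prove the latter, I would introduce an elliptic inner product of the form
\begin{equation}
    \langle f_1,f_2\rangle_\el := \sum_{[\gamma]} \frac{1}{|Z_G(\gamma)(F)/Z_G(F)|}\, f_1(\gamma)\,\overline{f_2(\gamma)}\,|D(\gamma)|,
\end{equation}
where $[\gamma]$ runs over conjugacy classes of elliptic regular semisimple elements of $G(F)$ (with a fixed Haar measure on the compact-mod-center tori). Kazhdan's theorem asserts that this pairing vanishes on pairs of non-isomorphic supercuspidals $\pi_1,\pi_2$ and equals $d(\pi)^{-1}\neq 0$ when $\pi_1\cong\pi_2=\pi$. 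Hence, given a vanishing combination $\sum_i a_i f_{\pi_i}|_{G(F)^\el} = 0$, pairing against each $f_{\pi_j}$ forces $a_j=0$, completing the reduction.

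The substance of the proof is therefore Kazhdan's orthogonality relations themselves. My approach would construct, for each supercuspidal $\pi$, a pseudocoefficient $e_\pi\in\ms{H}(G(F))$ essentially of the form $e_\pi = d(\pi)\overline{\varphi}$ for $\varphi$ a matrix coefficient of $\pi$; this is available precisely because matrix coefficients of supercuspidals are compactly supported modulo center, so no regularization or truncation is required. Schur orthogonality then yields $\tr(\pi'(e_\pi)) = \delta_{\pi\cong\pi'}$ for all tempered irreducible $\pi'$. Applying the Weyl integration formula to $\tr(\pi'(e_\pi))$ and invoking the Harish-Chandra regularity theorem expresses this trace as a sum over semisimple conjugacy classes of $f_{\pi'}$ against orbital integrals $O_\gamma(e_\pi)$. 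The main obstacle, and the technical core of the argument, is to show that the orbital integrals $O_\gamma(e_\pi)$ are supported on $G(F)^\el$ and there equal $\overline{f_\pi(\gamma)}$ (up to the standard Weyl normalization); this relies on Harish-Chandra descent together with the vanishing of Jacquet modules of a supercuspidal along every proper parabolic subgroup, which rules out contributions to $O_\gamma(e_\pi)$ from non-elliptic semisimple classes. Given this computation, the orthogonality relations and hence the injectivity of $R$ follow immediately.
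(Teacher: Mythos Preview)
The paper does not give its own proof of this lemma; it simply records the statement and cites Kazhdan. Your proposal is essentially a sketch of the proof of Kazhdan's orthogonality relations, which is indeed the content behind the citation, so in that sense your approach and the paper's ``approach'' coincide.

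One correction to your sketch: over a $p$-adic field the elliptic regular conjugacy classes do not form a discrete set, so your displayed elliptic pairing cannot be a sum over classes $[\gamma]$ weighted by $|Z_G(\gamma)(F)/Z_G(F)|^{-1}$ (that quotient is compact, not finite). The correct formulation is an integral over conjugacy classes of elliptic maximal tori,
\[
    \langle f_1,f_2\rangle_\el \;=\; \sum_{T}\,|W(G,T)|^{-1}\int_{T(F)/A_G(F)}|D(t)|\,f_1(t)\,\overline{f_2(t)}\,dt,
\]
with $T$ running over $G(F)$-conjugacy classes of elliptic maximal tori and $A_G$ the maximal split central torus. With this fix the rest of your outline is fine: the pseudocoefficient $e_\pi$ built from a matrix coefficient of a supercuspidal $\pi$ has regular semisimple orbital integrals supported on $G(F)^\el$ (by vanishing of all proper Jacquet modules), and there they compute $\overline{f_\pi}$ up to normalization; combining this with the Weyl integration formula and Schur orthogonality yields $\langle f_{\pi},f_{\pi'}\rangle_\el=d(\pi)^{-1}\delta_{\pi\cong\pi'}$, from which the linear independence of the $f_\pi|_{G(F)^\el}$ and hence the injectivity of $R$ follow.
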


In addition, we have the following observation concerning the interaction between $R$ and $\Avg$, which follows from the well-known fact that $\Theta$ is stable implies that $R(\Theta)$ is stable:

\begin{lem}\label{avglem} Let $\Theta\in D(\Irr^\SC(G(F)))$ be stable as a distribution. Then, $\Avg(R(\Theta))=R(\Theta)$.
\end{lem}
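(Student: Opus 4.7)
The plan is to deduce the lemma from the principle that $R$ carries stable virtual characters on $G(F)$ to stably invariant functions on $G(F)^\el$. More precisely, I would argue that stability of $\Theta$ as a distribution forces $R(\Theta)(\gamma) = R(\Theta)(\gamma')$ for every $\gamma, \gamma' \in G(F)^\el$ with $\gamma \simst \gamma'$. Granting this, the lemma falls out of the definition of $\Avg$: for any $\gamma \in G(F)^\el$,
\begin{equation}
\Avg(R(\Theta))(\gamma) = \frac{1}{n_\gamma}\sum_{\gamma' \simst \gamma} R(\Theta)(\gamma') = \frac{1}{n_\gamma}\cdot n_\gamma \cdot R(\Theta)(\gamma) = R(\Theta)(\gamma),
\end{equation}
since each of the $n_\gamma$ summands equals $R(\Theta)(\gamma)$ by the principle above.

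To establish the principle, I would invoke Harish-Chandra regularity together with the Weyl integration formula. By regularity, $\Theta$ is represented on $G(F)^\reg$ by a locally constant function $f_\Theta$, whose restriction to $G(F)^\el$ is $R(\Theta)$, and for any $h \in \ms{H}(G(F))$ supported in the elliptic regular locus one has
\begin{equation}
\Theta(h) = \sum_{T}\frac{1}{|W(T,G)(F)|}\int_{T(F)^\reg}|D_G(t)|\, f_\Theta(t)\, O_t(h)\, dt,
\end{equation}
the sum running over $G(F)$-conjugacy classes of elliptic maximal tori. Fix $\gamma \in G(F)^\el$ and let $\gamma = \gamma_1, \ldots, \gamma_{n_\gamma}$ be representatives of the rational classes inside the stable class of $\gamma$. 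Choose test functions $h_i \in \ms{H}(G(F))$ supported in sufficiently small neighborhoods of $\gamma_i$ with $O_{\gamma_j}(h_i) = \delta_{ij}$; then any combination $h := \sum_i c_i h_i$ with $\sum_i c_i = 0$ has vanishing stable orbital integrals throughout $G(F)^\reg$. Stability of $\Theta$ gives $\Theta(h) = 0$, and inserting this into the Weyl integration formula while shrinking the supports of the $h_i$ (using local constancy of $f_\Theta$) yields $\sum_i c_i f_\Theta(\gamma_i) = 0$ for every such tuple, whence $f_\Theta(\gamma_1) = \cdots = f_\Theta(\gamma_{n_\gamma})$.

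The main (and only nontrivial) obstacle is the second step: translating stability of the distribution $\Theta$ into stability of the character function $f_\Theta$ on the regular semisimple locus. This is classical and implicit in essentially every treatment of endoscopy, but requires some care with normalizations of measures and orbital integrals; I would cite a convenient reference for it rather than reproduce the technicalities. Once that fact is in hand, the lemma is a one-line consequence of the definition of $\Avg$.
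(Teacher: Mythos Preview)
Your proposal is correct and matches the paper's approach exactly: the paper does not give a proof at all but simply records the lemma as an immediate consequence of ``the well-known fact that $\Theta$ is stable implies that $R(\Theta)$ is stable,'' which is precisely the principle you isolate and then feed into the definition of $\Avg$. Your sketch of that well-known fact via the Weyl integration formula and suitably localized test functions is more than the paper supplies, and your instinct to cite a reference for it rather than chase the normalizations is in line with how the paper handles it.
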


We may now proceed to the proof of Proposition \ref{hiragalem}:

\begin{proof}(Proposition \ref{hiragalem}) By assumption \textbf{(Bij)}, the set of virtual characters $S\Theta^s_{\psi_i}$, as $s$ runs through representatives for the conjugacy classes in $\ov{C_\psi}$ and $i$ runs through $\{1,\ldots,n\}$, is a basis of $D(\psi_1,\ldots,\psi_n)$. It suffices to show this in the case when $n=1$ in which case it is clear. Indeed, writing just $\psi$ instead of $\psi_1$, we see that it suffices to note that the matrix $(\langle \pi,s\rangle)$, where $\pi$ runs through the elements of $\Pi_\psi(G)$, is unitary, and thus invertible, by the orthogonality of characters.

We next show that for any supercuspidal $L$-parameter $\psi$ and any non-trivial $s$ in $\ov{C_\psi}$ we have that $\Avg(R(S\Theta^s_\psi))=0$. Indeed, we begin by observing that by \cite[Lemma 6.20]{HiragaHiroshi} we have that 
  \begin{equation}
    \Avg(R(S\Theta^s_\psi))(\gamma) = \frac{1}{n_\gamma}\sum_{\gamma'}\sum_{\gamma_H\in X(\gamma')/\sim_\mathrm{st}}\Delta(\gamma_H,\gamma')\left|\frac{D_H(\gamma_H)}{D_G(\gamma')}\right|S\Theta_{\phi^H}(\gamma_H)
    \end{equation}
where here $\gamma'$ travels over the set of conjugacy classes of $G(F)$ stably equal to the conjugacy class of $\gamma$ and, as in loc. cit., $X(\gamma')$ is the set of conjugacy classes in $H(F)$ that transfer to $\gamma$, and $\Delta(\gamma_H,\gamma')$ is the usual transfer factor, and $D$ denotes the discriminant function. 

Let us note that we can rewrite this sum as 
\begin{equation}
     \frac{1}{n_\gamma}\sum_{\gamma_H\in X(\gamma)/\sim_\mathrm{st}}\left(\sum_{\gamma'}\Delta(\gamma_H,\gamma')\left|\frac{D_H(\gamma_H)}{D_G(\gamma')}\right|\right)S\Theta_{\phi^H}(\gamma_H).
\end{equation}
because $X(\gamma')/\sim_\mathrm{st}$ is independent of the choice of $\gamma'$. 

Note that $D_G(\gamma')=D_G(\gamma)$ for all $\gamma'$ stably conjugate to $\gamma$ (since $D_G(\gamma')$ is defined in terms of the characteristic polynomial of $\Ad(\gamma')$) and thus we can further rewrite this as
\begin{equation}
     \frac{1}{n_\gamma}\sum_{\gamma_H\in X(\gamma)/\sim_\mathrm{st}}\left|\frac{D_H(\gamma_H)}{D_G(\gamma)}\right|\left(\sum_{\gamma'}\Delta(\gamma_H,\gamma')\right)S\Theta_{\phi^H}(\gamma_H)
\end{equation}
and so it suffices to show that this inner sum $\displaystyle \sum_{\gamma'}\Delta(\gamma_H,\gamma')$ is zero. 

For $\gamma' \sim_{st} \gamma$, we have
\begin{equation}
    \Delta(\gamma_H, \gamma')=\langle \inv(\gamma, \gamma'), s \rangle \Delta(\gamma_H, \gamma),
\end{equation}
where $ \inv(\gamma, \gamma') \in \mf{K}(I_\gamma/F)^D$ (as in \cite[\S 2.2]{SugWooIgusa}). Since $\gamma$ is elliptic,  $\gamma' \mapsto \inv(\gamma, \gamma')$ gives a bijection between $F$-conjugacy classes in the stable conjugacy class of $\gamma$ and $\mf{K}(I_{\gamma}/F)^D$. Hence
\begin{equation}
    \sum_{\gamma'}\Delta(\gamma_H,\gamma')= \Delta(\gamma_H, \gamma) \sum\limits_{\chi \in \mf{K}(I_{\gamma})^D} \chi(s).
\end{equation}
In particular, it suffices to show that $s$ gives a nontrivial element of $\mf{K}(I_{\gamma}/F)$. Since $(H,s,\eta)$ is a nontrivial elliptic endoscopic datum and $\gamma$ is elliptic, this follows from \cite[Lemma 2.8]{SugWooIgusa}.

Now, since the set $\{S\Theta_{\psi_1},\ldots,S\Theta_{\psi_n}\}$ is independent (by assumption \textbf{(Dis)}) it suffices to show that this set generates $S(\psi_1,\ldots,\psi_n)$. But, this is now clear since if $\Theta\in S(\psi_1,\ldots,\psi_n)$ then we know by Lemma \ref{avglem} that $\Avg(R(\Theta))=R(\Theta)$. On the other hand, writing
\begin{equation}
    \Theta=\sum_{i=1}^n \sum_s a_{is}S\Theta^s_{\psi_i}
\end{equation}
we see from the above discussion, as well as combining assumption \textbf{(St)} with Lemma \ref{avglem}, that
\begin{equation}
    \Avg(R(\Theta))=\sum_{i=1}^n R(S a_{ie}\Theta_{\psi_i})=R\left(\sum_{i=1}^n a_{ie}S\Theta_{\psi_i}\right)
\end{equation}
(identifying $S\Theta_{\psi_i}$ with $S\Theta^e_{\psi_i}$ where $e$ is the identity conjugacy class in $\ov{C_\psi}$). The claim then follows from Lemma \ref{Rinj}.
\end{proof}

\section{Proof of main result}

Let us begin by explaining that it suffices to assume $G$ is quasi-split. Indeed, note that the assumptions of Theorem \ref{main} are also satisfied for $(G, z_{\iso})$ equal to $(G^\ast,1)$ and so, in particular, if we have proven the theorem in the case of $(G^\ast,1)$ then we know that $\Pi^1_{G^\ast}=\Pi^2_{G^\ast}$. Now, let $\psi$ be any supercuspidal $L$-parameter for $G$. By assumption \textbf{(ECI')} we have that 
\begin{equation}
S\Theta^1_\psi(h)=S\Theta^1_{\psi^{G^{\ast}}}(h^{G^\ast})=S\Theta^2_{\psi^{G^\ast}}(h^{G^\ast})=S\Theta^2_\psi(h)
\end{equation}
for all $h \in \ms{H}(G(F))$ and where the superscripts correspond to those of $\Pi^i$. By independence of characters, this implies that $\Pi^1_{(G,z_\iso)}(\psi)=\Pi^2_{(G,z_\iso)}(\psi)$. It remains to show that $\iota^1_{\mf{w}_H}=\iota^2_{\mf{w}_H}$. Since each $\iota^i_{\mf{w}_G}(\pi)$ is algebraic, it suffices to show that for all $\pi\in\Pi^1_{(G,z_\iso)}(\psi)=\Pi^2_{(G,z_\iso)}(\psi)$ one has that $\langle \pi,s\rangle_{\mf{w}_H}^1=\langle \pi,s\rangle_{\mf{w}_H}^2$ for all $s\in C_\psi$. By independence of characters, it suffices to show that $\Theta^{1,s}_\psi=\Theta^{2,s}_\psi$ for all $s\in C_\psi$. By the standard bijection $(H,s,\Leta, \psi^H) \Longleftrightarrow (\psi, s)$ (cf. \cite[Prop. 2.10]{BM2}) and the \textbf{(Ext)} assumption, each such $s$ comes from an extended elliptic endoscopic datum $(H,s,\Leta)$. Hence by \textbf{(ECI')} we have reduced to the quasi-split setting. We now work in the situation when $(G,z_\iso)=(G^\ast,1)$.

Let us begin with the following lemma:

\begin{lem}\label{singlelem} Suppose that $H$ is an elliptic hyperendoscopic group of $G$ and suppose that $\Pi^1_H(\psi)$ is a singleton set $\{\pi\}$. Then, in fact, $\{\pi\}=\Pi^2_H(\psi)$.
\end{lem}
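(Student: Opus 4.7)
The plan is to show $\Theta_\pi$ is stable, use atomic stability to force $\{\pi\}$ to be a $\Pi^2_H$-packet on the nose, and then apply the Scholze--Shin equations together with \textbf{(Mu)} to identify the corresponding parameter with $\psi$.

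First, since $\Pi^1_H(\psi) = \{\pi\}$, the bijection in \textbf{(Bij)} forces $\Irr(\ov{C_\psi})$ to be a singleton, hence $\ov{C_\psi}$ is trivial. By \textbf{(St)} for $\Pi^1$, the distribution $S\Theta^1_\psi = \Theta_\pi$ is stable. By hypothesis (1) of Theorem \ref{main}, $\pi$ is $\Pi^2_H$-accessible, so $\pi \in \Pi^2_H(\psi')$ for some supercuspidal parameter $\psi'$, unique up to $\sim$ by \textbf{(Dis)}. Next I would apply Proposition \ref{stabprop} to $\Pi^2$ with $S = \{\pi\}$ and $a_1 = 1$: since $\Theta_\pi$ is stable, $\{\pi\}$ must partition into $\Pi^2_H$-packets, which forces $\Pi^2_H(\psi') = \{\pi\}$ and hence $S\Theta^2_{\psi'} = \Theta_\pi$.

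Now I would invoke \textbf{(SS)} for both correspondences. Equating the two resulting formulas for $\Theta_\pi(f^\mu_{\tau,h})$ gives, for every $\mu \in S^H$, every $\tau \in W_{E_\mu}$, and every $h \in \ms{H}(K^H)$,
\begin{equation*}
\bigl[\tr(\tau \mid (r_{-\mu}\circ\psi)(\chi_\mu)) - \tr(\tau \mid (r_{-\mu}\circ\psi')(\chi_\mu))\bigr]\,\Theta_\pi(h) \;=\; 0.
\end{equation*}
Choosing an $h$ with $\Theta_\pi(h) \neq 0$ and letting $\tau$ vary, the characters of the semisimple representations $r_{-\mu}\circ\psi$ and $r_{-\mu}\circ\psi'$ of $W_{E_\mu}$ agree (the twist by $\chi_\mu$ cancels), so $r_{-\mu}\circ\psi \sim r_{-\mu}\circ\psi'$. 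Since this holds for all $\mu$ in the sufficient set $S^H$, \textbf{(Mu)} yields $\psi \sim \psi'$, and therefore $\Pi^2_H(\psi) = \Pi^2_H(\psi') = \{\pi\}$.

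The main obstacle in the last paragraph is securing the existence of $h \in \ms{H}(K^H)$ with $\Theta_\pi(h) \neq 0$, i.e.\ that $\pi$ admits a nonzero $K^H$-fixed vector; this should be a built-in feature of the Scholze--Shin datum, where $K^H$ is implicitly taken small enough relative to the accessible supercuspidals under consideration. Modulo this point, the argument is a clean assembly of \textbf{(St)}, atomic stability (Proposition \ref{stabprop}), \textbf{(SS)}, and \textbf{(Mu)}, with no further computation required.
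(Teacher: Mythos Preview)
Your argument is correct and follows essentially the same route as the paper's proof: stability of $\Theta_\pi$ from \textbf{(St)}, then atomic stability (Proposition~\ref{stabprop}) to force $\{\pi\}=\Pi^2_H(\psi')$, then \textbf{(SS)} for both correspondences together with a choice of $h\in\ms{H}(K^H)$ with $\Theta_\pi(h)\neq 0$, and finally \textbf{(Mu)} to conclude $\psi\sim\psi'$. The concern you flag about the existence of such an $h$ is not addressed in the paper either; it simply asserts ``choosing $h\in\ms{H}(K^H)$ such that $\tr(h\mid\pi)\ne 0$'' without further comment, so your proof is no weaker than the original on this point.
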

\begin{proof} Since $\{\pi\}$ is a superscuspidal packet for $\Pi^1_H$, we have by assumption \textbf{(St)} that $\Theta_\pi$ is stable. By the assumption of the theorem, $\pi$ is $\Pi^2_H$-accessible and since $\Pi^2_H$ satisfies \textbf{(AS)} (by the contents of \S4), we have $\{\pi\}=\Pi^2_H(\psi')$ for some supercuspidal $L$-parameter $\psi'$ of $H$. Then, by the assumption of the theorem we have that 
\begin{equation}
    \tr(\tau\mid (r_{-\mu}\circ \psi)(\chi_\mu))\tr(h\mid \pi)=\tr(f_{\tau,h}^\mu\mid \pi)=\tr(\tau\mid (r_{-\mu}\circ\psi')(\chi_\mu))\tr(h\mid \pi)
\end{equation}
In particular, choosing $h\in\ms{H}(K^H)$ such that $\tr(h\mid \pi)\ne 0$ and letting $\tau$ vary we deduce that 
\begin{equation}
    \tr(\tau\mid(r_{-\mu}\circ\psi)(\chi_\mu))=\tr(\tau\mid(r_{-\mu}\circ\psi')(\chi_\mu))
\end{equation}
for all $\tau\in W_E$. This implies, since $\psi$ is supercuspidal so that $r_{-\mu}\circ\psi$ and $r_{-\mu}\circ\psi'$ are semi-simple, that $r_{-\mu}\circ\psi\sim r_{-\mu}\circ\psi'$ for all $\mu\in S^H$. By our assumption that $S^H$ is sufficient, we deduce that $\psi\sim\psi'$. In particular, $\{\pi\}=\Pi^2_\psi(H)$ as desired.
\end{proof}
\begin{lem}{\label{nontrivslem}}
Let $H$ be an elliptic hyperendoscopic group for $G$. Let $\psi$ be a supercuspidal parameter for $H$ and suppose $\ov{C_{\psi}} \neq \{1\}$. If $\rho$ is an irreducible representation of $\ov{C_{\psi^H}}$ then there exists a nontrivial $\ov{s} \in \ov{C_{\psi}}$ such that the trace character $\chi_{\rho}$ of $\rho$ satisfies  $\tr(\ov{s} \mid \rho) \neq 0$.
\end{lem}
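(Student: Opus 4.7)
The plan is to reduce this to a purely representation-theoretic fact about finite groups: for any nontrivial finite group $C$ and any irreducible complex representation $\rho$ of $C$, there is at least one nonidentity element $s \in C$ with $\chi_\rho(s) \neq 0$. Once this is established, since the notation section of the paper guarantees that $\overline{C_\psi}$ is finite whenever $\psi$ is supercuspidal (the identity component of $C_\psi$ lies in $Z(\widehat{G})^{\Gamma_F}$), instantiating $C = \overline{C_\psi}$ immediately yields the lemma.

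To prove the general finite-group statement I would argue by contradiction. Suppose that $\chi_\rho(s) = 0$ for every $s \neq e$. Then the first orthogonality relation between $\chi_\rho$ and the trivial character $\chi_{\mathrm{triv}}$ collapses to
\[
\langle \chi_\rho, \chi_{\mathrm{triv}} \rangle \;=\; \frac{1}{|C|}\sum_{s \in C}\chi_\rho(s) \;=\; \frac{\dim \rho}{|C|}.
\]
By Schur orthogonality this inner product equals $1$ if $\rho$ is the trivial representation of $C$ and $0$ otherwise. In the nontrivial case we would obtain $\dim \rho = 0$, which is absurd. In the trivial case $\chi_\rho \equiv 1$, and the hypothesis $\overline{C_\psi} \neq \{1\}$ furnishes a nonidentity element $\overline{s}$ with $\chi_\rho(\overline{s}) = 1 \neq 0$, again contradicting the standing vanishing assumption.

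There is no real obstacle here; the argument is textbook character theory. The only point requiring care is observing that the hypothesis $\overline{C_\psi} \neq \{1\}$ is used specifically to rule out the trivial-representation-of-the-trivial-group counterexample, and that supercuspidality of $\psi$ is precisely what puts us into the finite-group setting where Schur orthogonality is available.
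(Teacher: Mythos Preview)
Your proof is correct and follows essentially the same approach as the paper: both argue by contradiction and appeal to Schur orthogonality for the finite group $\overline{C_\psi}$. The only cosmetic difference is that you pair $\chi_\rho$ against the trivial character (obtaining $\dim\rho/|C|$), whereas the paper pairs $\chi_\rho$ against itself (obtaining $\dim(\rho)^2/|C|$ and then invoking the regular representation to force $\rho$ trivial); your route is arguably a touch more direct, but the idea is the same.
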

\begin{proof}
Suppose $\rho$ vanishes on all nontrivial $\ov{s}$. Then we have
\begin{equation}
    1=\langle\chi_\rho,\chi_{\rho}\rangle= \frac{1}{|\ov{C_{\psi}}|} \sum\limits_{\ov{s} \in \ov{C_{\psi}}} \chi_{\rho}(\ov{s})^2=\frac{1}{|\ov{C_{\psi}}|}\chi_{\rho}(1)^2=\frac{1}{|\ov{C_{\psi}}|}\dim(\rho)^2,
\end{equation}
so that $|\ov{C_{\psi}}|= \dim(\rho)^2$. But every irreducible representation $\rho'$ of $\ov{C_{\psi}}$ is isomorphic to an irreducible factor appearing with multiplicity $\dim(\rho')$ in the regular representation of $\ov{C_{\psi}}$, which has dimension $|\ov{C_{\psi}}|$. Hence $\rho$ must be the unique irreducible representation of $\ov{C_{\psi}}$, which implies that $\rho$ is isomorphic to the trivial representation, and hence that  $|\ov{C_{\psi}}|=1$ contrary to assumption.
\end{proof}

We now explain the proof of Theorem \ref{main} in general:

\begin{proof}(of Theorem \ref{main}) We prove this by inducting on the number of roots $k$ for elliptic hyperendoscopic groups $H$ of $G$. If $k=0$ then $H$ is a torus. Since every distribution on $H$ is stable, one deduces from assumption \textbf{(Dis)} and assumption \textbf{(St)} that $\Pi^1_H(\psi)$ is a singleton and thus we are done by Lemma \ref{singlelem}. Suppose now that the result is true for elliptic hyperendoscopic groups of $G$ with at most $k$ roots. Let $H$ be an elliptic hyperendoscopic group of $G$ with $k+1$ roots and let $\psi$ be a supercuspidal parameter of $H$. We wish to show that $\Pi^1_H(\psi)=\Pi^2_H(\psi)$. If $\Pi^1_H(\psi)$ is a singleton, then we are done again by Lemma \ref{singlelem}. Otherwise, we show that $\Pi^1_H(\psi) \subset \Pi^2_H(\psi)$, which by \textbf{(Bij)} will imply that $\Pi^1_H(\psi)=\Pi^2_H(\psi)$. By Lemma \ref{nontrivslem}, we can find a non-trivial $\ov{s}\in \ov{C_\psi}$ and a lift $s \in C_{\psi}$ such that $\langle \pi,s\rangle\ne 0$. By definition of $\ov{C_{\psi}}$, we have that $s \notin Z(\widehat{G})$. Now, it suffices to show that $\Theta^{1,s}_\psi=\Theta^{2,s}_\psi$ since then by indpendence of characters, we deduce that $\pi\in\Pi^2_H(\psi)$ as desired. 

To show that $\Theta^{1,s}_\psi=\Theta^{2,s}_\psi$ for all non-trivial $s\in \ov{C_\psi}$ we proceed as follows. We obtain, by combining our assumption \textbf{(Ext)} and \cite[Proposition I.2.15]{BMY1}) from $(\psi,s)$, an extended elliptic endoscopic quadruple $(H',s,\Leta,\psi^{H'})$ with $\psi^{H'}$ supercuspidal so that $\psi=\Leta\circ\psi^{H'}$. One then has from Assumption \textbf{(ECI)} that 
\begin{equation}
    \Theta^{1,s}_\psi=\Theta^{2,s}_\psi\Longleftrightarrow S\Theta^1_{\psi^{H'}}=S\Theta^2_{\psi^{H'}}
\end{equation}
Moreover, since $s$ is non-central, we know that $H'$ has a smaller number of roots than $H$ and thus $S\Theta^1_{\psi^{H'}}=S\Theta^2_{\psi^{H'}}$ by induction. The conclusion that $\Pi^1=\Pi^2$ follows.

Let us now show that for any supercuspidal $L$-parameter $\psi$ one has that $\iota_{\mf{w}_H}^1=\iota_{\mf{w}_H}^2$ for all elliptic hyperendoscopic groups $H$ of $G$ and Whittaker data $\mf{w}_H$ of $H$. It suffices to show that $\langle \pi,s\rangle_{\mf{w}_H}^1=\langle \pi,s\rangle_{\mf{w}_H}^2$ for all $\pi\in\Pi^1_\psi(H)=\Pi^2_\psi(H)$. By independence of characters, it suffices to show that $\Theta^{1,s}_\psi=\Theta^{2,s}_\psi$ for all $s\in \ov{C_\psi}$. Since $s\in\ov{C_\psi}$, there exists, associated to the pair $(\psi,s)$, a quadruple $(H',s,\Leta,\psi^{H'})$ as in \cite[Proposition I.2.15]{BMY1} (again using also assumption \textbf{(Ext)}) where $H'$ is an elliptic endoscopic group of $H$ and $\psi^{H'}$ is a parameter such that $\psi=\Leta\circ\psi^{H'}$. By assumption \textbf{(ECI)} it suffices to show that $S\Theta^1_{\psi^{H'}}=S\Theta^2_{\psi^{H'}}$, but this follows from the previous part of the argument since we know that $\Pi^1_{H'}({\psi^{H'}})=\Pi^2_{H'}(\psi^{H'})$. The theorem follows.
\end{proof}

\section{Examples and Discussion of Assumptions}

In this section we discuss examples of groups and correspondences satisfying the assumptions \textbf{(Mu)}, \textbf{(Ext)}, and \textbf{(SS)}. We also comment on some possible rephrasings and generalizations of this work.

\subsection{Examples satisfying \textbf{(Mu)}}\label{goodsec}

In this subsection we explain that several classes of classical groups satisfy assumption \textbf{(Mu)}. In particular, we have the following:

\begin{prop}[{\cite[Theorem 8.1]{GGP}}]\label{ggpthm} Let $G$ be such that $G^\ast$ is one of the following: a general linear group, a unitary group, or an odd special orthogonal group. Then $G$ satisfies assumption \textbf{\emph{(Mu)}}.
\end{prop}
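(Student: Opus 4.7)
The plan is to choose, for each elliptic hyperendoscopic group $H$ of $G^\ast$, a set $S^H$ consisting (up to supports) of a single minuscule cocharacter per simple factor of $\widehat{H}$---namely the one whose associated $L$-group representation is the ``standard'' representation of that factor. The proposition then reduces to the classical fact that a supercuspidal $L$-parameter valued in the $L$-group of one of the three listed types is determined, up to $\widehat{H}$-conjugacy, by its composition with this standard representation, which is the substance of Theorem~8.1 of \cite{GGP} in the most delicate case.

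First I would classify elliptic hyperendoscopic groups within each family. For $G^\ast=\GL_{n,F}$ the ellipticity condition $(Z(\widehat{H})^{\Gamma_F})^\circ\subset Z(\widehat{G})$ forces the centralizer defining the datum to have one-dimensional center, so the only elliptic endoscopic group of $\GL_n$ is $\GL_n$ itself. For $G^\ast$ a quasi-split unitary group $\mathrm{U}(n)$, elliptic endoscopic groups are products $\mathrm{U}(a)\times\mathrm{U}(b)$ with $a+b=n$. For $G^\ast=\SO(2n+1)$ with dual $\mathrm{Sp}(2n,\C)$, centralizers of involutions in $\mathrm{Sp}(2n,\C)$ are products $\mathrm{Sp}(2a,\C)\times\mathrm{Sp}(2b,\C)$, so elliptic endoscopic groups are products of odd special orthogonal groups. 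Iterating, every elliptic hyperendoscopic group of $G^\ast$ is a product of groups of the same type; in particular no even orthogonal factor ever appears.

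Next I would reduce (Mu) for a product $H=H_1\times\cdots\times H_r$ to (Mu) for each $H_i$. Assuming sufficient sets $S^{H_i}$ for each factor, I take $S^H$ to consist of all cocharacters of $H_{\ov{F}}$ whose support lies in a single $H_i$-factor, with $\mu_i\in S^{H_i}$ in that slot. For such $\mu$ the representation $r_{-\mu}$ of ${}^L H$ factors through the projection to ${}^L H_i$ and restricts to $r_{-\mu_i}$ there; hence $r_{-\mu}\circ\psi$ depends only on $\psi_i$ and equals $r_{-\mu_i}\circ\psi_i$. Knowing all such data thus determines each $\psi_i$ up to equivalence by the sufficiency of $S^{H_i}$, and hence determines $\psi=(\psi_1,\ldots,\psi_r)$.

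It then remains to verify (Mu) for a single simple factor of each of the three types, taking $S^H$ to be the minuscule cocharacter corresponding to the standard representation of $\widehat{H}$. For $\GL_n$ the composition is (up to duality) just $\psi$ itself as an $n$-dimensional $W_F$-representation, and the statement is tautological. For $\mathrm{U}(n)$ the associated $L$-group representation is the standard base-change representation $\BC\colon{}^L\mathrm{U}(n)\to\GL_n(\C)$, and determining a supercuspidal $\psi$ from $\BC\circ\psi$ is well known (and is the essential content of Theorem~8.1 of \cite{GGP} in this case). For $\SO(2n+1)$ the relevant embedding is the standard symplectic $\mathrm{Sp}(2n,\C)\hookrightarrow\GL(2n,\C)$, and the required statement---that two $\mathrm{Sp}(2n,\C)$-valued parameters which are $\GL(2n,\C)$-conjugate are already $\mathrm{Sp}(2n,\C)$-conjugate---is precisely Theorem~8.1 of \cite{GGP}. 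The main obstacle, and indeed the reason the argument fails for symplectic and even special orthogonal groups, is this reconstruction step: for $\SO(2n)$ the outer automorphism makes $\GL(2n,\C)$-conjugacy strictly coarser than $\SO(2n,\C)$-conjugacy, and the elliptic hyperendoscopy of $\mathrm{Sp}(2n)$ unavoidably introduces such even orthogonal factors, which is exactly what Theorem~8.1 of \cite{GGP} avoids in the three good cases.
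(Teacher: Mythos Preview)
Your proposal is correct and follows essentially the same approach as the paper: invoke \cite[Theorem~8.1]{GGP} for the standard representation of each simple factor, classify the elliptic hyperendoscopic groups of each of the three types as products of groups of the same type, and reduce to the simple case via the product observation (the paper isolates this last step as a separate trivial lemma, which you prove inline). Your additional remarks on why the argument fails for symplectic and even orthogonal groups are accurate but not part of the paper's proof.
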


We record the following trivial observation:

\begin{lem}\label{prod} Suppose that $G_1,\ldots,G_m$ are groups satisfying assumption \textbf{\emph{(Mu)}}, then $G_1\times \cdots\times G_m$ satisfies assumption \textbf{\emph{(Mu)}}.
\end{lem}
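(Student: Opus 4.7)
The plan is to build a sufficient set for the product by taking sufficient sets for each factor and placing them, one coordinate at a time, into the product, using the fact that a cocharacter supported on a single factor controls only that factor of a parameter.

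First I would unpack the product structure. Write $G = G_1 \times \cdots \times G_m$, so that $\widehat{G} = \prod_i \widehat{G_i}$ with componentwise $\Gamma_F$-action, and every $L$-parameter $\psi\colon W_F \to \LG$ decomposes uniquely as a tuple $(\psi_1,\ldots,\psi_m)$ with $\psi_i\colon W_F \to \LG_i$. Since $C_\psi = \prod_i C_{\psi_i}$ and $Z(\widehat{G})^{\Gamma_F} = \prod_i Z(\widehat{G_i})^{\Gamma_F}$, the Kottwitz criterion recalled in \cite[\S 10.3.1]{KottwitzCuspidal} shows that $\psi$ is supercuspidal iff each $\psi_i$ is, while $\psi \sim \psi'$ in $\widehat{G}$ iff $\psi_i \sim \psi'_i$ in $\widehat{G_i}$ for every $i$, and a dominant cocharacter of $G_{\ov{F}}$ is just a tuple of dominant cocharacters of the factors.

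For each $i$ fix a sufficient set $S^{G_i}$ for $G_i$ and define
\[
    S^G := \bigcup_{i=1}^m \{\iota_i(\nu) : \nu \in S^{G_i}\},
\]
where $\iota_i\colon X_*(G_{i,\ov{F}}) \to X_*(G_{\ov{F}})$ places a cocharacter $\nu$ in the $i$-th slot and puts $0$ in the remaining slots. Each $\iota_i(\nu)$ is dominant, and its reflex field equals $E_\nu$ because the trivial cocharacter of $G_j$ for $j \neq i$ has reflex field $F$. The key observation is that $r_{-\iota_i(\nu)}$, as a representation of $\LG_{E_\nu}$, is the external tensor product of $r_{-\nu}$ on $\LG_i$ with trivial $1$-dimensional representations on the other factors; equivalently, it is the pullback of $r_{-\nu}$ along the projection $\LG \to \LG_i$, so that $r_{-\iota_i(\nu)} \circ \psi \simeq r_{-\nu} \circ \psi_i$ as $W_{E_\nu}$-representations.

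Given two supercuspidal parameters $\psi, \psi'$ of $G$ with $r_{-\mu} \circ \psi \sim r_{-\mu} \circ \psi'$ for every $\mu \in S^G$, taking $\mu = \iota_i(\nu)$ yields $r_{-\nu} \circ \psi_i \sim r_{-\nu} \circ \psi'_i$ for every $\nu \in S^{G_i}$. Supercuspidality of $\psi_i$ and $\psi'_i$ then lets us invoke assumption \textbf{(Mu)} for $G_i$ to conclude $\psi_i \sim \psi'_i$, and aggregating over $i$ gives $\psi \sim \psi'$. No substantial obstacle arises; the only point requiring any care is the identification $r_{-\iota_i(\nu)} \circ \psi \simeq r_{-\nu} \circ \psi_i$, which is immediate from Kottwitz's definition in \cite[(2.1.1)]{KottwitzTOO} together with the decomposition $\widehat{G} = \prod_i \widehat{G_i}$.
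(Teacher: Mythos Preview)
Your argument is correct. The paper itself offers no proof: it simply labels the lemma a ``trivial observation'' and moves on, so there is nothing to compare against beyond noting that you have supplied exactly the natural argument the authors had in mind. Your construction of $S^G$ via the embeddings $\iota_i$ and the identification $r_{-\iota_i(\nu)}\circ\psi \simeq r_{-\nu}\circ\psi_i$ is the obvious way to reduce to each factor, and the remaining steps are routine.
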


We can then quickly explain the proof of Proposition \ref{ggpthm}:

\begin{proof} By \cite[Theorem 8.1]{GGP}, we can recover $\psi$ from $r_{- \mu} \circ \psi$ in the case of $\GL_n$, $U(n)$, $\mathrm{SO}_{2n+1}$ where $r_{-\mu}$ corresponds to the standard representation. However, to prove \textbf{(Mu)}, one must also prove a result about recovering $\psi$ from $r_{-\mu}\circ\psi$ not only for $G^\ast$ but for all elliptic hyperendoscopic groups $H$ of $G^\ast$. We analyze this for each case.
\begin{itemize}
\item There are no nontrivial elliptic hyperendoscopic groups for $\GL_n$  and so there is no difficulty in this case. 
\item The elliptic hyperendoscopic groups of unitary groups are products of unitary groups so we are done by Lemma \ref{prod}.
\item The elliptic hyperendoscopic groups of odd special orthogonal groups are products of odd special orthogonal groups so we are again done by \ref{prod}.
\end{itemize}
\end{proof}

\subsection{Examples satisfying \textbf{(Ext)}}\label{extcondsec}

The authors are not aware of any example for $G$ a group over $F$ where this property does not hold. If $G$ and all its hyperendoscopic groups have simply connected derived subgroup, then \textbf{(Ext)} follows from \cite[Prop. 1]{LanglandsConj}. In particular, unitary groups satisfy \textbf{(Ext)}. 

All elliptic endoscopic data $(H,s,\eta)$ for $G$ a symplectic or special orthogonal group can also be extended to a datum $(H, s, \Leta)$ (\cite[pg.5]{KalethaLLNQSG}). Since the elliptic endoscopic groups of symplectic and special orthogonal groups are products of groups of this type (\cite[\S 1.8]{Waldspurgerendoscopy}), it follows that symplectic and special orthogonal groups also satisfy \textbf{(Ext)}.

One could likely remove the assumption \textbf{(Ext)} altogether at the cost of having to consider $z$-extensions of endoscopic groups (see \cite{KS}) and perhaps slightly modify the statement of Theorem \ref{main} to account for these extra groups.

\subsection{Examples of Scholze--Shin datum}

In this subsection we explain the origin of the Scholze--Shin datum and equations, explain the extent to which such datum and equations are thought to exist, and discuss known examples and expected examples. 

In \cite{ScholzeDeformation}, Scholze constructs functions $f_{\tau,h}^\mu$ for certain unramified groups $G$ and for certain cocharacters $\mu$ which together form a datum $(G,\mu)$ that one might call of `PEL type'. In the second named author's thesis \cite{Youcis} these functions and their basic properties were extended to a larger class of pairs $(G,\mu)$ which the author calls `abelian type'. It seems plausible that functions $f_{\tau,h}^\mu$ (and thus Scholze--Shin datum) can be constructed in essentially full generality using the ideas of \cite{ScholzeDeformation} and \cite{Youcis} but using the moduli spaces of shtukas constructed by Scholze et al.

In \cite{ScholzeShin}, Scholze and Shin, in the course of studying the cohomology of compact unitary similitude Shimura varieties, posit that for the class of groups $G$ showing up in the pair \cite{ScholzeDeformation} that the local Langlands conjecture should satisfy the Scholze--Shin equations for the Scholze--Shin datum constructed in ibid.\textemdash we refer to such conjectures as the \emph{Scholze--Shin conjectures}. In fact, Scholze and Shin describe endoscopic versions of the Scholze--Shin equations and thus arrive at endoscopic versions of the Scholze--Shin conjectures. They show that their conjectures hold in the case of groups of the form $\GL_n(F)$ (cf. \cite{ScholzeMasters}) and the Harris--Taylor version of the local Langlands conjectures.

Using the functions $f_{\tau,h}^\mu$ constructed by the second named author in his thesis \cite{Youcis} one can construct Scholze--Shin data for a wider class of groups including the groups $G=U(n)^\ast_{E/F}$ where $E/\Q_p$ is an unramified extension of $\Q_p$. In the article \cite{BMY1}, the authors show that the Scholze--Shin conjectures hold true for such unitary groups (at least in the trivial endoscopic case which is all that is needed in this work) using the version of the local Langlands conjectures constructed by Mok in \cite{Mok} and for non-quasisplit unitary groups in \cite{KMSW}.

\subsection{Discussion of Extended Pure Inner Twists}

In this paper we have considered only $G$ that arise as extended pure inner twists of $G^*$ (e.g. see \cite{KalethaLLNQSG}). In general, the map 
\begin{equation}
    \mb{B}(G^*)_{\bas} \to \Inn(G^*),
\end{equation}
where $\Inn(G^*) := \im[H^1(F, G^*_{\ad}(\ov{F})) \to H^1(F, \Aut(G^*)(\ov{F})]$ denotes the set of inner twists of $G^*$, need not be surjective. However, when $G^*$ has connected center, this map will be surjective (see \cite[pg.20]{KalethaLLNQSG}). In general, one can likely consider all inner twists by adapting the arguments of this paper to the language of rigid inner twists as in \cite{KalethaRigid} (cf. \cite{KalethaLLNQSG}).

\subsection{The characterization in the unitary case}

Combining the discussion of \S5.1-\S5.3 and admitting the results of the preprint \cite{BMY1} we see in particular the following:

\begin{thm} Let $E/\Q_p$ be an unramified extension and $F$ the quadratic subextension of $E$. Let $G$ be an extended pure inner form twist of the quasi-split unitary group $U_{E/F}(n)^\ast$  associated to $E/F$. Then, the local Langlands correspondence for $G$ as in \cite{Mok} and \cite{KMSW} satisfies the Scholze--Shin conjecture (by the results of \cite{BMY1}) and thus is characterized by Theorem \ref{main}.
\end{thm}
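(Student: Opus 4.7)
The plan is to check the hypotheses of Theorem \ref{main} for the Mok--KMSW correspondence $\Pi^{\mathrm{MK}}$ on $(G, z_\iso)$ and invoke it directly.

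First, I would verify that $G$ is good and satisfies \textbf{(Ext)}. Every elliptic hyperendoscopic group of $U_{E/F}(n)^\ast$ is a product of quasi-split unitary groups attached to unramified quadratic extensions, so Proposition \ref{ggpthm} combined with Lemma \ref{prod} yields \textbf{(Mu)} at every level of the hyperendoscopic tower. Because these groups all have simply connected derived subgroup, assumption \textbf{(Ext)} follows from \cite[Prop.~1]{LanglandsConj}, as recorded in \S\ref{extcondsec}.

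Second, I would observe that the construction of \cite{Mok} in the quasi-split case, together with its extension to non-quasi-split extended pure inner twists in \cite{KMSW}, produces a supercuspidal local Langlands correspondence in the precise sense of \S3: the assignments $\Pi^{\mathrm{MK}}_H$ satisfy \textbf{(Dis)}, \textbf{(Bij)}/\textbf{(Bij')}, \textbf{(St)}, and the endoscopic character identities \textbf{(ECI)}/\textbf{(ECI')} by the main theorems of those papers, and this persists across the hyperendoscopic tower since every group encountered is again a product of unramified unitary groups and thus falls under the Mok--KMSW framework. Third, the authors' companion preprint \cite{BMY1} exhibits a Scholze--Shin datum $\{f^\mu_{\tau, h}\}$ for these unitary groups and verifies, in the trivial endoscopic case (which is all that is needed here), that $\Pi^{\mathrm{MK}}$ satisfies the Scholze--Shin equations \textbf{(SS)} relative to this datum. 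Taking $\Pi^1 = \Pi^{\mathrm{MK}}$ and letting $\Pi^2$ be any other supercuspidal local Langlands correspondence for $(G, z_\iso)$ satisfying \textbf{(SS)} with respect to the same datum and whose accessible set contains that of $\Pi^{\mathrm{MK}}$, Theorem \ref{main} immediately gives $\Pi^1 = \Pi^2$ together with matching Whittaker normalizations, which is the asserted characterization.

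The main obstacle is essentially conceptual rather than technical: the only substantive input specific to this theorem is the verification of \textbf{(SS)} carried out in \cite{BMY1}. Every remaining ingredient --- goodness, \textbf{(Ext)}, and the axioms of a supercuspidal LLC --- is an immediate consequence of the classification of endoscopic data for unramified unitary groups together with the cited theorems of \cite{Mok}, \cite{KMSW}, \cite{LanglandsConj}, and \cite{GGP}.
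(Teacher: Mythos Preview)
Your proposal is correct and follows precisely the paper's approach: the paper presents this theorem without a separate proof, stating it as an immediate consequence of the discussion in \S\ref{goodsec}--\S5.3 together with the results of \cite{BMY1}, and your write-up simply makes those verifications explicit. The only minor addition you make is spelling out that the Mok--KMSW construction satisfies the axioms \textbf{(Dis)}, \textbf{(Bij)}/\textbf{(Bij')}, \textbf{(St)}, \textbf{(ECI)}/\textbf{(ECI')}, which the paper leaves implicit.
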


\bibliographystyle{amsalpha}
\bibliography{bib}

\providecommand{\bysame}{\leavevmode\hbox to3em{\hrulefill}\thinspace}
\providecommand{\MR}{\relax\ifhmode\unskip\space\fi MR }
\providecommand{\MRhref}[2]{%
  \href{http://www.ams.org/mathscinet-getitem?mr=#1}{#2}
}
\providecommand{\href}[2]{#2}
\begin{thebibliography}{KMSW14}

\bibitem[Art13]{Arthurbook}
James Arthur, \emph{The endoscopic classification of representations}, American
  Mathematical Society Colloquium Publications, vol.~61, American Mathematical
  Society, Providence, RI, 2013, Orthogonal and symplectic groups. \MR{3135650}

\bibitem[BM20]{BM2}
Alexander Bertoloni~Meli, \emph{An averaging formula for rapoport-zink spaces},
  2020.

\bibitem[BMY19]{BMY1}
Alexander Bertoloni~Meli and Alex Youcis, \emph{The scholze-shin conjecture for
  unramified unitary groups}, preprint on webpage at
  \url{https://math.berkeley.edu/~abm/resources/Scholze_Shin_Conjecture_for_Unitary_Groups__No_Endoscopy.pdf},
  2019.

\bibitem[Bor79]{BorelCorvallis}
Armand Borel, \emph{Automorphic l-functions}, Proc. Symp. Pure Math, vol.~33,
  1979, pp.~27--61.

\bibitem[GGP12]{GGP}
Wee~Teck Gan, Benedict~H Gross, and Dipendra Prasad, \emph{Symplectic local
  root numbers, central critical l-values, and restriction problems in the
  representation theory of classical groups}, Ast{\'e}risque \textbf{346}
  (2012), no.~1, 1--109.

\bibitem[Hen00]{Henniart}
Guy Henniart, \emph{Une preuve simple des conjectures de langlands pour gl (n)
  sur un corps p-adique}, Inventiones mathematicae \textbf{139} (2000), no.~2,
  439--455.

\bibitem[HS12]{HiragaHiroshi}
Kaoru Hiraga and Hiroshi Saito, \emph{On $ l $-packets for inner forms of $
  sl\_n$}, American Mathematical Soc., 2012.

\bibitem[HT01]{HarrisTaylor}
Michael Harris and Richard Taylor, \emph{The geometry and cohomology of some
  simple shimura varieties.(am-151)}, vol. 151, Princeton university press,
  2001.

\bibitem[Kal16a]{KalethaLLNQSG}
Tasho Kaletha, \emph{The local {L}anglands conjectures for non-quasi-split
  groups}, Families of automorphic forms and the trace formula, Simons Symp.,
  Springer, [Cham], 2016, pp.~217--257. \MR{3675168}

\bibitem[Kal16b]{KalethaRigid}
\bysame, \emph{Rigid inner forms of real and {$p$}-adic groups}, Ann. of Math.
  (2) \textbf{184} (2016), no.~2, 559--632. \MR{3548533}

\bibitem[Kaz86]{Kazhdanpaper}
David Kazhdan, \emph{Representations of groups over close local fields},
  Journal d'analyse math{\'e}matique \textbf{47} (1986), no.~1, 175--179.

\bibitem[KMSW14]{KMSW}
Tasho Kaletha, Alberto Minguez, Sug~Woo Shin, and Paul-James White,
  \emph{Endoscopic classification of representations: inner forms of unitary
  groups}, arXiv preprint arXiv:1409.3731 (2014).

\bibitem[Kot84a]{KottwitzTOO}
Robert~E Kottwitz, \emph{Shimura varieties and twisted orbital integrals},
  Mathematische Annalen \textbf{269} (1984), no.~3, 287--300.

\bibitem[Kot84b]{KottwitzCuspidal}
\bysame, \emph{Stable trace formula: Cuspidal tempered terms}, Duke
  Mathematical Journal \textbf{51} (1984), no.~3, 611--650.

\bibitem[Kot97]{Kotiso}
Robert~E. Kottwitz, \emph{Isocrystals with additional structure. {II}},
  Compositio Math. \textbf{109} (1997), no.~3, 255--339. \MR{1485921}

\bibitem[KS99]{KS}
Robert~E. Kottwitz and Diana Shelstad, \emph{Foundations of twisted endoscopy},
  Ast\'{e}risque (1999), no.~255, vi+190. \MR{1687096}

\bibitem[Lan79]{LanglandsConj}
R.~P. Langlands, \emph{Stable conjugacy: definitions and lemmas}, Canadian J.
  Math. \textbf{31} (1979), no.~4, 700--725. \MR{540901}

\bibitem[Mok15]{Mok}
Chung~Pang Mok, \emph{Endoscopic classification of representations of
  quasi-split unitary groups}, vol. 235, American Mathematical Society, 2015.

\bibitem[Sch13a]{ScholzeDeformation}
Peter Scholze, \emph{The langlands-kottwitz method and deformation spaces of
  p-divisible groups}, Journal of the American Mathematical Society \textbf{26}
  (2013), no.~1, 227--259.

\bibitem[Sch13b]{ScholzeMasters}
\bysame, \emph{The local {L}anglands correspondence for $\mathrm{GL}_n$ over
  $p$-adic fields}, Inventiones {M}athematicae \textbf{192} (2013), no.~3,
  663--715.

\bibitem[Shi10]{SugWooIgusa}
Sug~Woo Shin, \emph{A stable trace formula for igusa varieties}, Journal of the
  Institute of Mathematics of Jussieu \textbf{9} (2010), no.~4, 847--895.

\bibitem[SS13]{ScholzeShin}
Peter Scholze and Sug~Woo Shin, \emph{On the cohomology of compact unitary
  group {S}himura varieties at ramified split places}, Journal of the American
  Mathematical Society \textbf{26} (2013), no.~1, 261--294.

\bibitem[Wal10]{Waldspurgerendoscopy}
J.-L. Waldspurger, \emph{Les facteurs de transfert pour les groupes classiques:
  un formulaire}, Manuscripta Math. \textbf{133} (2010), no.~1-2, 41--82.
  \MR{2672539}

\bibitem[You19]{Youcis}
Alex Youcis, \emph{The langlands-kottwitz-scholze method for deformation spaces
  of abelian type (in preparation)}.

\end{thebibliography}

\end{document}